\newtheorem{theorem}{\bf Theorem}
\newtheorem{example}{\bf Example}
\newtheorem{remark}{\bf Remark}
\newtheorem{definition}{\bf Definition}
\newtheorem{proposition}{\bf Proposition}
\DeclareMathOperator*{\argmin}{arg\,min}
\newcommand{\farhad}[1]{\color{black}#1\color{black}}
\newcommand{\cedric}[1]{\color{black}#1\color{black}}
\title{\LARGE \bf
A Faithful Distributed Implementation of Dual Decomposition and Average Consensus Algorithms}
\author{Takashi Tanaka, Farhad Farokhi, C\'edric Langbort
\thanks{T. Tanaka is with Laboratory for Information and Decision Systems, Massachusetts Institute of Technology, USA
        {\tt\small ttanaka@mit.edu}}%
\thanks{F. Farokhi is with ACCESS Linnaeus Center, KTH Royal Institute of Technology, Sweden
        {\tt\small farakhi@kth.se}}%
\thanks{C. Langbort is with Department of Aerospace Engineering, University of Illinois at Urbana-Champaign, USA
        {\tt\small langbort@illinois.edu}}%
}
\begin{document}

\maketitle
\begin{abstract}
We consider large scale cost allocation problems and consensus seeking problems for multiple agents, in which agents are suggested to collaborate in a distributed algorithm to find a solution. If agents are strategic to minimize their own individual cost rather than the global social cost, they are endowed with an incentive not to follow the intended algorithm, unless the tax/subsidy mechanism is carefully designed. Inspired by the classical Vickrey-Clarke-Groves mechanism and more recent algorithmic mechanism design theory, we propose a tax mechanism that incentivises agents to faithfully implement the intended algorithm. In particular, a new notion of asymptotic incentive compatibility is introduced to characterize a desirable property of such class of mechanisms. The proposed class of tax mechanisms provides a sequence of mechanisms that gives agents a diminishing incentive to deviate from suggested algorithm.
\end{abstract}

\section{Introduction}

A common difficulty in large scale optimization problems that arise in social, infrastructural, and communication networks is the heavy computational load that cannot be handled by a single computer. A practical solution algorithm for such problems should thus be parallelizable so that the computational load can be distributed over the network agents. Another potential challenge in such problems comes from the fact that nobody has access to the entire data defining the optimization problem, because this information is often private and localized within distributed agents. Hence, it is desirable that a solution algorithm allows agents to implement the algorithm without knowing other agents' private information.

In this paper, we consider two distributed algorithms that are both attractive in the above sense: the \emph{dual decomposition} algorithm for cost allocation problems, and the \emph{average consensus} algorithms for a consensus seeking.
In the majority of the literature on these algorithms, it is assumed that the distributed agents are ``benevolent" and blindly follow the intended algorithm. However, if the society involves rational and strategic agents, it is more realistic to assume that they behave in more selfish manner in an effort to minimize their individual cost rather than the global social cost.
Hence in this paper, rather than \emph{assuming} that agents are collaborative, we consider a mechanism by which rational agents are \emph{incentivised} to follow the intended distributed algorithm.

Mechanism design theory (\cite{RefWorks:260,RefWorks:100,RefWorks:83} to mention a few) concerns the question \farhad{of } how the society can make a preferable decision with the presence of strategic agents.
The goal of a mechanism design is to suggest a social decision procedure (together with a tax/subsidy mechanism) that incentivises agents to follow the intended action.
However, the framework of the classical mechanism design theory is often insufficient to handle more complicated distributed algorithms.

It is relatively recent that the discipline of algorithmic mechanism design \cite{RefWorks:236} was recognized in computer science. Inspired by \cite{RefWorks:233}, we formulate a distributed mechanism design problem and suggest a tax mechanism that incentivises agents in a certain sense to follow the intended dual decomposition and the average consensus algorithm. Our tax mechanism can be seen as a generalization of the celebrated Vickrey-Clarke-Groves (VCG) mechanism.

By its nature, these two algorithms are asymptotic algorithms: if terminated at some finite step, only an approximation to the optimal solution is obtained.
It is recognized in \cite{RefWorks:108,RefWorks:235} that the VCG mechanism combined with such an approximated solution does not guarantee incentive compatibility in general. To circumvent this difficulty, we introduce a notion of \emph{asymptotic incentive compatibility} for a sequence of mechanisms that provides agents a diminishing incentive to deviate from the intended algorithm.

Finally, with the present study, we are able to relate our earlier study of real-time electricity pricing scheme \cite{RefWorks:261,RefWorks:237} to the mechanism design theory in more solid manner. Moreover, the framework of distributed mechanism design potentially allows us to implement the pricing scheme in \cite{RefWorks:261,RefWorks:237} in a smarter way: it allows distributed computations and allows agents not to disclose their private information.

\section{A Quick Review of Mechanism Design}
\subsection{Mechanisms}
Consider a society $E_N$ comprised of $N$ agents. Besides these agents, the society also has a government who makes a social decision $x\in X\subset \mathbb{R}^n$. Each agent $i=1,\cdots, N$ has private information $\theta_i\in\Theta_i$ called \emph{type}. When a social decision $x$ is made, the intrinsic cost $v_i(x;\theta_i)$ is charged to the $i$-th agent.
The government desires to make a social decision $x$ in such a way that the sum of individual costs is minimized. If the agents' types $\theta=(\theta_1,\cdots,\theta_N)\in \Theta_1\times\cdots\times\Theta_N =:\Theta$ are available to the government, the desired social decision is given by
$$
x(\theta)=\argmin_{x\in X} \sum_{i=1}^N v_i(x;\theta_i).
$$
A map $x:\Theta\rightarrow X$ is called a \emph{decision rule}. A decision rule is said to be \emph{efficient} if
$$
\sum_{i=1}^N v_i(x(\theta);\theta_i) \leq \sum_{i=1}^N v_i(x';\theta_i)
$$
for all $\theta\in \Theta$ and for all $x'\in X$.
Since $\theta$ is private in reality, each agent is asked to report his type (denoted by $\hat{\theta}_i$) to the government. In this process, we assume that agents are strategic in that they are allowed to make an unfaithful report (i.e., $\hat{\theta}_i\neq \theta_i$). Notice that agent $i$ has an incentive to do so if it will lead to an alternative social decision $x'$ that costs less for himself, i.e., $v_i(x';\theta_i)<v_i(x(\theta);\theta_i)$.

In order to encourage agents to be truthful, the government can introduce a monetary transfer function $t:\Theta\rightarrow \mathbb{R}^N$. This can be seen as a tax mechanism that the government imposes on each agent. We assume that the amount of tax imposed on each agent is determined by the government using the reported type $\hat{\theta}$:
$$
t(\hat{\theta})=\left(t_1(\hat{\theta}),
\cdots,t_N(\hat{\theta})\right).
$$
Overall, agent $i$'s net cost is given by
$$
u_i(\hat{\theta},\theta_i,x,t)=v_i(x(\hat{\theta});\theta_i)+t_i(\hat{\theta})
$$
and agents are expected to behave rationally to minimize this function.
The pair $f(\theta)=\left(x(\theta),t(\theta)\right)$, $f:\Theta\mapsto X \times \mathbb{R}^N$ is called a \emph{social choice function}.

The reporting process can be designed in an indirect manner using message functions $s_i:\Theta_i\rightarrow \Sigma_i$, where $\Sigma=\Sigma_1\times\cdots\times\Sigma_N$ is the space of messages. Upon receiving ``encoded" types $s(\theta)=\left(s_1(\theta_1),\cdots,s_N(\theta_N)\right)$, the government recovers the value of the social choice function using a ``decoding" function $g:\Sigma\rightarrow X \times \mathbb{R}^N$ such that $(g\circ s)(\theta)=f(\theta)$ for every $\theta\in\Theta$. A \emph{mechanism} is a triplet $M=(g,\Sigma, s)$ of an outcome function $g$, a space of messages $\Sigma$, and a particular encoding scheme $s:\Theta\rightarrow \Sigma$ such that $g\circ s=f$ \farhad{(see~Fig.~\ref{commdiag})}. One can think of $s$ as an intended encoding scheme that the government desires each agent to follow.  A particular case with $\Sigma=\Theta$, $g=f$, and $s=Id$ \farhad{(i.e., identity map) } is called a \emph{direct mechanism}, in which agents are asked to report their types $\theta_i$ to the government without encoding. A mechanism is said to be \emph{dominant strategy incentive compatible} if implementing the suggested encoding scheme $s=(s_1,\cdots,s_N)$ is a dominant strategy for each individual. If there exists such a dominant strategy incentive compatible mechanism $M=(g,\Sigma,s)$ such that $f=g\circ s$, $f$ is said to be implemented by $M$ in dominant strategies.
Since the government announces a mechanism first and the agents react to it, the government and the agents are also referred to as the leader and the followers in what follows.

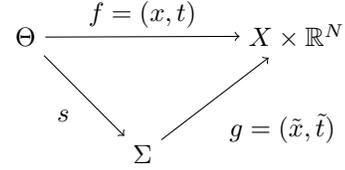
\begin{figure}[ht]
 \begin{center}
 \begin{tikzpicture}[descr/.style={fill=white,inner sep=2.5pt}]
 \matrix(m) [matrix of math nodes, row sep=3em,
 column sep=3em]
 { \Theta & & X \times \mathbb{R}^N \\ & \Sigma & \\ };
 \path[->]
 (m-1-1) edge node[auto] {$ f=(x,t) $} (m-1-3)
         edge node[below left=2pt] {$ s $} (m-2-2)
 (m-2-2) edge node[below right=2pt] {$ g=(\tilde{x},\tilde{t}) $} (m-1-3);
 \end{tikzpicture}
  \end{center}
\caption{\label{commdiag} A pair $f=(x,t)$ is called the social choice function while a pair $g=(\tilde{x},\tilde{t})$ is called the outcome function}
\end{figure}

\subsection{Fundamental results from mechanism theory}
If a mechanism $(g,\Sigma,s)$ implements a social choice function $f=(x,t)$ in dominant strategies, then a direct mechanism $(g\circ s, \Theta, Id)$ implements $f$ in dominant strategies as well (revelation principle). This suggests that a given social choice function is implemented by some mechanism $M$ if and only if it is implemented by some direct mechanism in dominant strategies.

If the government wishes to design a mechanism that implements a particular social decision rule $x$ without introducing monetary transfer function (i.e., $t\equiv 0$), then the decision rule has to be quite trivial (Gibbard-Satterthwaite theorem). This suggest that a tax (or subsidy) mechanism is almost necessary for the government to incentivise individuals to be truthful to make the right social decision $x$.

Let $x:\Theta\rightarrow X$ be an efficient social decision rule. It is elementary to prove, but valuable to realize that there exists a class of monetary transfer functions $t^{Groves}:\Theta\rightarrow \mathbb{R}^N$ such that a direct mechanism $(f,\Theta,Id)$ with the social choice function $f=(x,t^{Groves})$ is dominant strategy incentive compatible (\cite{RefWorks:83} for a great introduction). Such a class of monetary transfer functions are given by
\begin{subequations}
\label{groves}
\begin{align}
x^{Groves}(\hat{\theta})&=\argmin_{x\in X} \sum_{i=1}^N v_i(x;\hat{\theta}_i) \label{groves1}\\
t_i^{Groves}(\hat{\theta})&=k_i(\hat{\theta}_{-i})+\sum_{j \neq i} v_j(x^{Groves}(\hat{\theta});\hat{\theta}_j) \label{groves2}
\end{align}
\end{subequations}
for each $i=1,\cdots,N$, where $k_i:\prod_{j\neq i} \Theta_j \rightarrow \mathbb{R}$ is an arbitrary function that does not depend on $\hat{\theta}_i$. A mechanism obtained by the above scheme is referred to as a Groves' mechanism.
It is known that the form of monetary transfer function (\ref{groves2}) is not only sufficient but also necessary in an appropriate sense, in order for $x$ to be efficient and a mechanism $(f, \Theta, Id)$, $f=(x,t)$ is dominant strategy incentive compatible (Green-Laffont theorem).

\subsection{VCG mechanisms}
A particular choice of
$$k_i(\hat{\theta}_{-i})=-\min_{x\in X} \sum_{j\neq i} v_j(x;\hat{\theta}_j)
$$
yields a mechanism with some additional desirable properties. With this choice, the monetary transfer function becomes
\begin{equation}
\label{vcg}
t_i^{VCG}(\hat{\theta})=\sum_{j\neq i} v_j(x^{Groves}(\hat{\theta});\hat{\theta}_j)
-\min_{x\in X} \sum_{j\neq i} v_j(x;\hat{\theta}_j).
\end{equation}
Let $E_{N-i}$ denote the society excluding the $i$-th agent. The first term in (\ref{vcg}) corresponds to the total cost of $N-1$ agents (excluding $i$) when an efficient social decision is made for $E_N$. The second term in (\ref{vcg}) represents the minimum social cost achievable for $E_{N-i}$. Combined, (\ref{vcg}) means that the tax imposed on the $i$-th agent is the same amount as the marginal contribution of the $i$-th agent to the rest of the society. The Groves mechanism with tax policy (\ref{vcg}) is called the Vickrey-Clarke-Groves (VCG) mechanism, which is known to be advantageous from the viewpoint of \emph{budget balance} and \emph{individual rationality} \cite{RefWorks:100}. In order to compute (\ref{vcg}), the government needs to determine social decisions that minimize the cost for each of $E_N$ and $E_{N-i}, i=1,\cdots,N$.

\section{Distributed Mechanisms}
A large body of mechanism design theory focuses on direct mechanisms. Although this is partially justified by the revelation principle, clearly there are a number of practical situations in which ``indirect" mechanisms are preferable. For instance, indirect mechanisms allow distributed computations for large scale problems, while direct mechanisms require central computation by the government to determine the optimal social decision. Indirect mechanisms will also be advantageous for privacy preservation if they allow to find the optimal social decision without having individuals disclose their private information completely.

Previously, we considered $\Sigma$ as the space of messages and $g:\Sigma\rightarrow X\times \mathbb{R}^N$ was viewed as a decoding scheme. From this section on, we want to consider more general computational interactions between the leader and the followers than mere encoding-decoding interactions.
Specifically, we assume that the interaction between the leader and the followers occurs in multiple stages (indexed by $k=1,2,\cdots,K$). At each stage, the leader broadcasts his current computational output $y_L^k$ to the followers. Also, we assume that each follower transmits his current computational output $y_i^k$ directly to the leader (and possibly to the neighboring followers, depending on the communication topology) via secure channels. We assume that the leader can be modeled as a state-based computer with the internal state $z_L^k$, while the $i$-th follower can be modeled as a state-based computer with the internal state $z_i^k$. Given initial states $z_L^0,z_i^0$ and $y_L^0, y_i^0, i=1,\cdots,N$, the state evolves according to:
\begin{subequations}
\label{algall}
\begin{align}
z_i^k &= G_{i,\theta_i}^k(z_i^{k-1},y_L^{k-1}, \{y_j^{k-1}\}_{j\in N(i)}) \label{alg1}\\
y_i^k &= H_{i,\theta_i}^k(z_i^k) \label{alg2}\\
z_L^k &= G_L^k (z_L^{k-1}, y_1^k, \cdots, y_N^k) \label{alg3}\\
y_L^k &= H_L^k (z_L^k) \label{alg4}
\end{align}
\end{subequations}
for $k=1,2,\cdots,K$. In the above, $\{y_j^{k-1}\}_{j\in N(i)}$ represents the outputs of the neighboring followers, and hence we are considering a communication topology as in Fig.~\ref{figcommtop}.
Finally, we require that
$$
y_L^K=H_L^K(z_L^K)=(x,t)\in X\times \mathbb{R}^N
$$
which will be the value of the social choice.

A strategy of the $i$-th follower is the sequence of functions in (\ref{alg1}) and (\ref{alg2}):
$$
s_i(\theta_i)=\left\{(G_{i,\theta_i}^k, H_{i,\theta_i}^k): k=1,2,\cdots, K\right\}
$$
parametrized by his type $\theta_i$. On the other hand, the outcome function is defined by the sequence of functions in (\ref{alg3}) and (\ref{alg4}):
$$
g=\left\{(G_L^k, H_L^k): k=1,2,\cdots, K\right\}.
$$
\begin{figure}[tbp]
\begin{center}
\includegraphics[width=0.85\linewidth, bb=20 120 670 430]{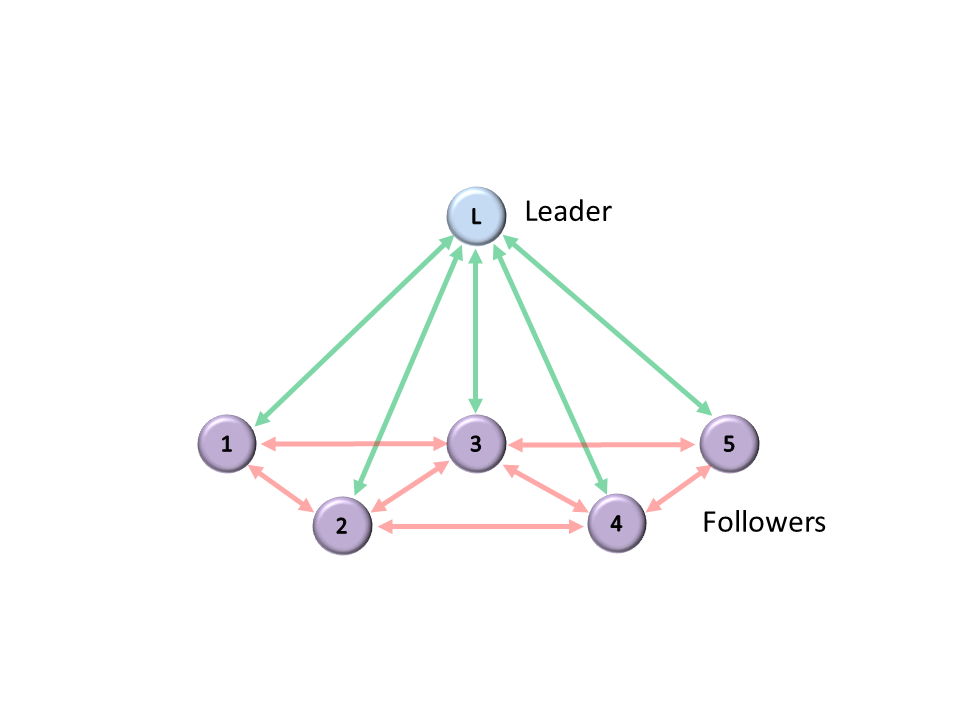}
\caption{Communication topology of distributed algorithms}
\label{figcommtop}
\end{center}
\end{figure}
We call $s_i(\cdot)$ a \emph{slave} algorithm, which can be seen as a map from $\Theta_i$ to the space $\Sigma_i$ of follower $i$'s strategies. On the other hand, $g$ is referred to as a \emph{master} algorithm.
Given an element in $\Sigma$, a master algorithm $g$ determines a social choice $(x,t)$. Hence, $g$ can be a map from $\Sigma$ to $X\times \mathbb{R}^N$ so that $(g\circ s)(\theta)=(x,t)$. This way, we can reuse the diagram of Fig.~\ref{commdiag} with a generalized interpretation.
To summarize:
\begin{itemize}
\item $\Sigma_i$ is the space of all possible strategies that could be taken by followers involved in a game. Notice that the state space description of a dynamical system is not unique. Hence, we define $\Sigma$ as a space of the equivalence classes of slave algorithms that have the same input-output behavior. A concrete description of the strategy space varies depending on the problem of interest.
\item $s_i:\Theta_i\rightarrow \Sigma_i$ is a mapping that determines a strategy of the $i$-th follower with type $\theta_i$. Each of these mappings for $i=1,\cdots,N$ is referred to as a slave algorithm.
\item A mapping $g:\Sigma\rightarrow X \times \mathbb{R}^N$, $g(s)=(\tilde{x}(s),\tilde{t}(s))$ is referred to as the master algorithm. A composition $a=g\circ s: \Theta\rightarrow \mathbb{R}^N$ is called a distributed algorithm.
\end{itemize}
With this re-interpretation, a mechanism $(g,\Sigma,s)$ is defined as a triplet of a master algorithm $g$, followers' strategy space $\Sigma$, and a \emph{suggested} slave algorithm $s$.

The notion of dominant strategy incentive compatibility is often too strong and hard to achieve. Hence, it is common to employ a weaker notion of incentive compatibility.
\begin{definition}
\label{defic}
A mechanism $(g,\Sigma,s)$ \emph{implements} a social choice function $f$ in \emph{ex-post Nash equilibria} if
\begin{itemize}
\item[(1).] $g\circ s=f$
\item[(2).]$\forall i, \forall \hat{s}_i\in\Sigma_i, \forall \theta\in \Theta$,
\begin{align*}
&v_i\left(\tilde{x}_i\circ (s_i(\theta_i),s_{-i}(\theta_{-i}));\theta_i\right)
+\tilde{t}_i\circ (s_i(\theta_i),s_{-i}(\theta_{-i})) \\
&\leq v_i\left(\tilde{x}_i\circ (\hat{s}_i,s_{-i}(\theta_{-i}));\theta_i\right)
+\tilde{t}_i\circ (\hat{s}_i,s_{-i}(\theta_{-i}))
\end{align*}
\end{itemize}
\end{definition}
where $s=(s_i,s_{-i})$. In this case, the mechanism is also said to be \emph{incentive compatible}.

\section{Dual decomposition}
Consider the following cost allocation problem.
\begin{subequations}
 \label{originalprob}
\begin{align}
\min &\;\; \sum_{i=1}^N v_i(x_i;\theta_i) \label{originalprob1}\\
\text{s.t.} &\;\; Rx=c  \label{originalprob2}
\end{align}
\end{subequations}
A vector $x=[x_1^T;\cdots;x_N^T]^T\in X$ is a concatenation of the social decision variables, and the domain $X$ is defined by an affine constraint $Rx=c$. We assume that $R=[R_1 \; \cdots \; R_N]$ is full row rank.

Let $L(x,p)= \sum_{i=1}^N v_i(x_i;\theta_i)+p^T(Rx-c)$ be the Lagrangian of (\ref{originalprob}). The dual function is given by
$$
g(p)=\inf_{x} L(x,p)=\sum_{i=1}^N \inf_{x_i}\left(v_i(x_i;\theta)+p^TR_ix_i\right),
$$
and the dual problem is
$
\sup_p g(p).
$
The primal-dual optimal solution $(x^*,p^*)$ constitutes a saddle point of $L(x,p)$, and assuming strict convexity of $v_i(\cdot;\theta)$, the saddle point value $L^*$ corresponds to the optimal value of (\ref{originalprob}).
The following iteration is guaranteed to converge to $(x^*,p^*)$
\begin{subequations}
\label{ahu}
\begin{align}
&\hat{x}_i^k=\argmin_{\hat{x}_i} \left( v_i(\hat{x}_i^{k-1},\theta_i)+{p^{k-1}}^TR_i\hat{x}_i^{k-1} \right) \label{ahu1} \\
&p^k=p^{k-1}+ \gamma(R\hat{x}^k-c) \label{ahu2}
\end{align}
\end{subequations}
if the step size $\gamma$ is chosen to be sufficiently small.
At each step of the above iteration, we have an upper and lower bound for the optimal value of (\ref{originalprob}). A lower bound $\underbar{b}$ can be computed using current $\hat{x}$ and $p$ by
\begin{align*}
\underbar{b}&:= \sum_{i=1}^N \left(v_i(\hat{x}_i; \theta_i)+p^TR_i\hat{x}_i\right) \\
&= \inf_x L(x,p) \leq \sup_p \inf_x L(x,p)=L^*.
\end{align*}
Although the equality constraint (\ref{originalprob2}) may not be satisfied by the current $\hat{x}$, the nearest feasible point $x$ from $\hat{x}$ is given by
$$
x=\hat{x}-R^T(RR^T)^{-1}(R\hat{x}-c).
$$
Using this $x$, an upper bound $\bar{b}$ of the optimal value of (\ref{originalprob}) can be computed by
$$
\bar{b}:= \sum_{i=1}^N v_i(x_i;\theta_i)\geq L^*.
$$
Practically, one terminates the iterative procedure (\ref{ahu}) once the observed tolerance $\bar{b}-\underbar{b}$ is sufficiently small.

Notice that the above algorithm has an attractive form for a distributed implementation since (\ref{ahu1}) can be executed by the $i$-th follower and (\ref{ahu2}) can be executed by the leader. Namely (\ref{ahu1}) and (\ref{ahu2}) is in the form of (\ref{algall}) (See (\ref{statespace}) below).
By increasing the number of iterations, the intended social decision can be approximated with an arbitrary accuracy, provided that the followers faithfully implement (\ref{ahu1}). In what follows, we consider what kind of side payment mechanism $t_i$ suffices to incentivise followers to be faithful.

\subsubsection{Pure competitive market}
In a pure competitive market, every follower \emph{believes} that the price of a commodity is a given constant that cannot be manipulated by his sole action (i.e, followers are \emph{price-takers}). Such an assumption is employed in the standard t\^atonnement process which, after $K$ iterations, charges $t_i={p^K}^TR_ix_i$ on each follower. In this model, the dual variable $p$ can be naturally understood as the price of commodities, which defines followers' net cost
$$
u_i=v_i(x_i;\theta_i)+p^TR_ix_i.
$$
With the belief that the price is locally constant, following (\ref{ahu1}) is a rational choice for a price-taking follower.

\subsubsection{Oligopoly}
Many realistic markets are oligopoly, in which there exists a stakeholder who knows that his actions give certain effects on the market \cite{RefWorks:238}. In this case, a stakeholder might be better off by deviating from (\ref{ahu1}) and taking an alternative strategy.

\begin{example}
Consider the following simple problem:
\begin{align*}
& \min \;\; \frac{1}{2}x_1^2+\frac{1}{2}x_2^2 \\
& \text{ s.t. } \;\; x_1+x_2=1.
\end{align*}
The iteration (\ref{ahu}) leads to the primal-dual optimal solution $(x_1^*,x_2^*, p^*)=(1/2,1/2,-1/2)$. The corresponding optimal value is $L^*=1/2$ and agents' augmented costs are $u_1^*=u_2^*=-1/8$. To demonstrate that following (\ref{ahu1}) is not necessarily a rational strategy for the agents when the tax mechanism is $t_i=px_i$, suppose that player $1$ is a stakeholder (a quantity leader, \cite{RefWorks:238}) who knows how $p$ and $x_2$ react to his action $x_1$. This leads to a game with two agents, who are trying to minimize $u_i(x_i,p)=1/2 x_i^2+px_i$, $i=1,2$, and a market who is trying to maximize $L(x_1,x_2,p)=1/2x_1^2+1/2x_2^2+p(x_1+x_2-1)$. Assuming that the agent $1$ is the leader in the Stackelberg game and others are followers, reaction curves are given by $x_2(x_1)=1-x_1$ and $p(x_1)=x_1-1$.
Hence, suppose that agent $1$ takes a strategy to follow the following update rule instead of (\ref{ahu}):
\begin{equation}
\label{algleader}
\hat{x}_1^k=\hat{x}_1^{k-1}-\gamma \frac{d}{d\hat{x}_1}u_1(\hat{x}_1,p(\hat{x}_1)).
\end{equation}
With other players remaining to follow (\ref{ahu}), the new dynamics leads to a Stackelberg  equilibrium  $(x_1^*,x_2^*, p^*)=(1/3,2/3,-2/3)$. Notice that player $1$ achieves a smaller augmented cost $u_1^*=-1/6$ at the new equilibrium, even though the distributed algorithm as a whole clearly failed to find the solution to the original resource allocation problem. In this sense, the strategy of following (\ref{ahu}) by no means constitutes a Nash equilibrium.
\end{example}

\section{Faithful implementation of dual decomposition}

We are going to introduce a monetary transfer function $t$ that incentivises followers to implement (\ref{ahu}). For a sufficiently large $K$, a social decision
$$
x:=\hat{x}^K-R^T(RR^{T})^{-1}(R\hat{x}^K-c)
$$
approximates the optimal solution of (\ref{originalprob}) with an arbitrary accuracy. A natural choice of $t_i$ inspired by the Groves mechanism is then
\begin{equation}
\label{grovestax}
t_i:=\sum_{j\neq i} v_i(x_i;\theta_i).
\end{equation}
Throughout the algorithm, we require the followers to report not only $x_i^k$ but also the value $v_i(x_i^k;\theta)$ so that the leader is able to perform a convergence analysis and compute $t_i$. The proposed distributed algorithm is summarized in Algorithm \ref{distalg}. Note that the algorithm is parametrized by $n\in \mathbb{N}$, which guarantees the accuracy $1/n$ of the final result.
\begin{algorithm}
\caption{Distributed mechanism $M_n=(g^n,\Sigma^n,s^n)$}
\begin{algorithmic}
\label{distalg}
\REQUIRE $\text{Problem data, step size $\gamma$ }$
\ENSURE $\text{Social outputs } \tilde{x}, \tilde{t}_1,\cdots,\tilde{t}_N$
\STATE $\text{(F) Initialize } \hat{x}_1, \cdots, \hat{x}_N;$
\STATE $\text{(L) Initialize and broadcast } p \text{ and } x;$
\WHILE{$\bar{b}-\underbar{b} > 1/n$}
\STATE $\text{(F) Compute and report } v_i=v_i(x_i; \theta_i);$
\STATE $\text{(F) Find } \hat{x}_i=\argmin_{x_i}(v_i(x_i;\theta_i)+p^TR_ix_i)$
\STATE $\text{ and report } \hat{x}_i \text{ and } \hat{v}_i=v_i(\hat{x}_i;\theta_i);$
\STATE $\text{(L) Compute upper bound } \bar{b}=\sum_{i=1}^N v_i;$
\STATE $\text{(L) Compute lower bound } \underbar{b}=\sum_{i=1}^N(\hat{v}_i+p\hat{x}_i);$
\STATE $\text{(L) Compute constraint violation } e=R\hat{x}-c;$
\STATE $\text{(L) Update dual variable } p:=p+\gamma e;$
\STATE $\text{(L) Find nearest feasible point } x=\hat{x}-R(RR^T)^{-1}e;$
\STATE $\text{(L) Broadcast } p  \text{ and } x;$
\ENDWHILE
\STATE $\text{(L) Determine social decision } \tilde{x}=x;$
\STATE $\text{(L) Determine tax values }\tilde{t}_i=\sum_{j\neq i} v_i;$
\end{algorithmic}
\end{algorithm}


Note that Algorithm \ref{distalg} suggests followers' strategies $s^n(\cdot)$ and a social output function $g^n=(\tilde{x}^n,\tilde{t}^n)$. One possible state space description of the follower's strategy
$$
s_i(\theta)=\left\{(G_{i,\theta_i}^k, H_{i,\theta_i}^k): k=1,2,\cdots,K(n)\right\}.
$$
 is obtained by considering $y_L=(p; x)$ as the leader's output and
\begin{subequations}
\label{statespace}
\begin{align}
&\underbrace{\left[\begin{array}{c}
v_i \\ \hat{x}_i
\end{array}\right]}_{z_i^k}=
\underbrace{\left[\begin{array}{c}
v_i(x_i) \\ \argmin_{x_i} \left(v_i(x_i)+p^TR_ix_i\right)
\end{array}\right]}_{G_{i,\theta_i}^k(z_i^{k-1},y_L^{k-1})} \\
&\underbrace{\left[\begin{array}{c}
v_i \\ \hat{x}_i \\ \hat{v}_i
\end{array}\right]}_{y_i^k}=
\underbrace{\left[\begin{array}{c}
v_i \\ \hat{x}_i \\ v_i(\hat{x}_i;\theta_i)
\end{array}\right]}_{H_{i,\theta_i}^k(z_i^k)}.
\end{align}
\end{subequations}
The number of steps $K(n)$ is not known a priori but is finite. The strategy space $\Sigma_i^n$ for the $i$-th follower is the space of causal mappings from $y_L^0,\cdots,y_L^{K(n)}$ to $y_i^0,\cdots,y_i^{K(n)}$. This way Algorithm \ref{distalg} defines a sequence of mechanisms $\{M_n\}_{n\in \mathbb{N}}$, $M_n=(g^n,\Sigma^n,s^n)$.

Notice that the payment $t$ obtained in Algorithm \ref{distalg} is only an approximation of the correct VCG payment because, as per (\ref{grovestax}), it is computed using the final value of $x$, as opposed to the optimal social decision. Hence generally it does not guarantee incentive compatibility in the sense of Definition \ref{defic}. Such a fragile aspect of the VCG mechanism is considered in \cite{RefWorks:108}.
\begin{definition}
\label{defaic}
A sequence of mechanisms $\{M_n\}_{n\in\mathbb{N}}$, $M_n=(g^n,\Sigma^n,s^n)$ is said to \emph{asymptotically implement} a social choice function $f$ in ex-post Nash equilibria if for every $\delta_1>0,\delta_2>0$, there exists $N\in\mathbb{N}$ such that for any $n\geq N$,
\begin{itemize}
\item[(1).] $\|g^n\circ s^n-f\|<\delta_1$
\item[(2).] $\forall i, \forall \hat{s}_i^n\in\Sigma_i^n, \forall \theta\in \Theta$,
\begin{align*}
&\hspace{-.2in}v_i\left(\tilde{x}_i^n\circ (s_i^n(\theta_i),s_{-i}^n(\theta_{-i})));\theta_i\right)
+\tilde{t}_i^n\circ (s_i^n(\theta_i),s_{-i}^n(\theta_{-i})) \\
&\hspace{-.2in}< v_i\left(\tilde{x}_i^n\circ (\hat{s}_i^n,s_{-i}^n(\theta_{-i}));\theta_i\right)
+\tilde{t}_i^n\circ (\hat{s}_i^n,s_{-i}^n(\theta_{-i})) +\delta_2.
\end{align*}
\end{itemize}
In this case, $\{M_n\}_{n\in\mathbb{N}}$ is said to be \emph{asymptotically incentive compatible}.
\end{definition}
\begin{remark}
For a fixed $n\in \mathbb{N}$, $M_n$ is not incentive compatible. However, as $n\rightarrow \infty$, $M_n$ provides every follower a diminishing incentive to deviate from the suggested slave algorithm.
\end{remark}
\begin{theorem} \label{tho:1}
Assume that $v_i(\cdot;\theta_i), i=1,2,\cdots,N$ are strictly convex for every $\theta_i\in\Theta_i$, and define a social choice function $f=(x,t)$ by
\begin{align*}
& x:\Theta\rightarrow X, x(\theta)=\argmin_{x_1,\cdots,x_N} \sum_{i=1}^N v_i(x_i;\theta_i) \\
& t:\Theta\rightarrow \mathbb{R}^n, t_i(\theta)=\sum_{j\neq i} v_j(x_j(\theta);\theta_j).
\end{align*}
Then the sequence of mechanisms $\{M_n\}_{n\in\mathbb{N}}$  provided in Algorithm \ref{distalg} asymptotically implements $f$ in ex-post Nash equilibria.
\end{theorem}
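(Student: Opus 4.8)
The plan is to verify the two clauses of Definition \ref{defaic} in turn. Clause (1), the approximate-consistency bound $\|g^n\circ s^n-f\|<\delta_1$, is a convergence statement about the \emph{honest} run of Algorithm \ref{distalg}. Clause (2), the approximate incentive constraint, rests on the Groves structure of the payment (\ref{grovestax}).

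For clause (1), I would first note that when every follower uses the suggested slave algorithm $s^n$, Algorithm \ref{distalg} reduces exactly to the Arrow--Hurwicz iteration (\ref{ahu}), which for $\gamma$ small converges to the primal--dual optimum $(x^*,p^*)$. By strict convexity the minimizer in (\ref{ahu1}) is single-valued and continuous in $p$, so $\hat x^k\to x^*$ and hence the broadcast decision $x^k=\hat x^k-R^T(RR^T)^{-1}(R\hat x^k-c)$ converges to $x^*=x(\theta)$, the projection correction vanishing because $Rx^*=c$. Since the gap $\bar{b}-\underbar{b}$ tends to $0$ along this run, the stopping test is met after a finite number $K(n)$ of steps, so the terminal decision $\tilde x^n=x^{K(n)}$ converges to $x(\theta)$; continuity of the $v_j(\cdot;\theta_j)$ then gives $\tilde t_i^n=\sum_{j\neq i}v_j(\tilde x_j^n;\theta_j)\to t_i(\theta)$, i.e.\ $\|g^n\circ s^n-f\|\to 0$. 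Because the definition asks for a single $N$ valid for every $\theta\in\Theta$, I would add a mild standing regularity assumption (e.g.\ $\Theta$ compact and the problem data jointly continuous) so that this convergence is uniform in $\theta$.

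For clause (2), the pivotal step is an accounting identity: under \emph{any} profile $(\hat s_i^n,s_{-i}^n(\theta_{-i}))$ whose run terminates, agent $i$'s realized net cost equals the total social cost at the terminal decision. Indeed, whatever $\hat x$ the leader holds in the final round, its projection step produces $R\tilde x=R\hat x-(R\hat x-c)=c$, so $\tilde x$ is feasible for (\ref{originalprob}); and the honest followers $j\neq i$ report their \emph{true} costs $v_j(\tilde x_j;\theta_j)$, which agent $i$ cannot affect, so by (\ref{grovestax}) and with $V^*(\theta):=\min_{Rx=c}\sum_{j}v_j(x_j;\theta_j)=\sum_{j}v_j(x_j(\theta);\theta_j)$,
\begin{equation*}
v_i(\tilde x_i;\theta_i)+\tilde t_i^n \;=\; \sum_{j=1}^{N}v_j(\tilde x_j;\theta_j)\;\geq\;V^*(\theta).
\end{equation*}
When agent $i$ instead plays $s_i^n(\theta_i)$, the same identity gives net cost $\sum_j v_j(\tilde x_j^n;\theta_j)$ with $\tilde x^n$ the honest terminal decision, which by clause (1) equals $V^*(\theta)+\varepsilon_n(\theta)$ with $\varepsilon_n(\theta)\geq 0$ and $\sup_\theta\varepsilon_n(\theta)\to 0$. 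Subtracting, the net cost of $s_i^n(\theta_i)$ exceeds that of $\hat s_i^n$ by at most $\varepsilon_n(\theta)$, uniformly in $i$, $\hat s_i^n$, and $\theta$; given $\delta_1,\delta_2>0$ it then suffices to take $N$ with $\sup_\theta\varepsilon_n(\theta)<\delta_2$ and $\|g^n\circ s^n-f\|<\delta_1$ for all $n\geq N$.

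The step I expect to be the main obstacle is the deviation direction of clause (2): ruling out that some $\hat s_i^n$ pushes agent $i$'s net cost \emph{below} $V^*(\theta)$. The two manipulation-robust ingredients above are exactly what make it work --- the leader's projection renders the broadcast decision feasible regardless of the reported $\hat x$, and the Groves payment charges agent $i$ the \emph{other} agents' true costs, which the honest followers report faithfully --- so the deviator's objective collapses to $\sum_j v_j(\cdot;\theta_j)$ evaluated at a feasible point. The remaining loose ends are more clerical and I would dispose of them briefly: a deviation that prevents termination produces no social outcome and can be excluded (or neutralized by capping the horizon in Algorithm \ref{distalg}); the identity above requires reading the tax as in (\ref{grovestax}), i.e.\ evaluated at the terminal decision $\tilde x$, which is a cosmetic reordering of the last reporting round of Algorithm \ref{distalg} and is harmless since every computed iterate $x^k$ is feasible and, on the honest run, both $x^{K(n)}$ and $x^{K(n)-1}$ converge to $x(\theta)$; and uniformity in $\theta$ comes from the standing regularity assumption.
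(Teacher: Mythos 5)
Your proposal is correct and is essentially the paper's own argument: clause (1) via convergence of the honest dual-decomposition run plus continuity of the $v_j$, and clause (2) via the same three facts the paper uses --- the leader's projection makes the terminal decision feasible under any deviation, the Groves payment (\ref{grovestax}) with honest opponents turns agent $i$'s net cost into the total social cost (hence $\geq L^*$), and the stopping rule $\bar{b}-\underbar{b}\leq 1/n$ bounds the honest run's net cost by $L^*+1/n$. The only differences are presentational: you state the inequality chain directly where the paper argues by contradiction along a subsequence, and you flag technicalities (uniformity in $\theta$, non-terminating deviations, the tax being evaluated at the terminal decision) that the paper passes over silently.
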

\begin{proof}
By the convergence property of the dual decomposition algorithm, we have
$$
\tilde{x}^n \circ s^n(\theta)\rightarrow x(\theta) \text{ as } n\rightarrow \infty.
$$
As a result, by the continuity of $v_i$, $\forall i=1,\cdots, N$,
\begin{align*}
\tilde{t}_i^n \circ s^n(\theta)&=\sum_{j\neq i} v_j(\tilde{x}_j^n\circ s^n(\theta);\theta_j) \\
&\rightarrow \sum_{j\neq i} v_j(x_j(\theta);\theta_j)=t_i(\theta)
\end{align*}
as $n\rightarrow \infty$. This proves the first condition of Definition \ref{defaic}. To prove the second condition, suppose that there exist a sequence of strategies $\{\hat{s}_i^n\}_{n\in\mathbb{N}}$, $\delta_2>0$, and a subsequence $\{n_l\}$ in $\mathbb{N}$ such that
\begin{align}
&v_i\left(\tilde{x}_i^{n_l}\circ (s_i^{n_l}(\theta_i),s_{-i}^{n_l}\theta_{-i}));\theta_i\right)
+\tilde{t}_i^{n_l}\circ (s_i^{n_l}(\theta_i),s_{-i}^{n_l}(\theta_{-i})) \nonumber \\
&\geq v_i\left( \underbrace{\tilde{x}_i^{n_l}\circ (\hat{s}_i^{n_l},s_{-i}^{n_l}(\theta_{-i}))}_{\hat{x}};\theta_i\right)
+\underbrace{\tilde{t}_i^{n_l}\circ (\hat{s}_i^{n_l},s_{-i}^{n_l}(\theta_{-i}))}_{\hat{t}_i} +\delta_2 \label{contradiction}
\end{align}
for all $l\in\mathbb{N}$. Notice that
$$
\hat{x}:=\tilde{x}^{n_l}\circ (\hat{s}_i^{n_l},s_{-i}^{n_l}(\theta_{-i}))\in X
$$
is a feasible point in the original optimization problem, and
$$
\hat{t}_i:=\tilde{t}_i^{n_l}\circ (\hat{s}_i^{n_l},s_{-i}^{n_l}(\theta_{-i}))=\sum_{j\neq i} v_j(\hat{x}_j;\theta_j).
$$
Hence
\begin{align*}
(\text{RHS of (\ref{contradiction})}) &= v_i(\hat{x}_i;\theta_i)+\sum_{j\neq i} v_j(\hat{x}_j;\theta_j)+\delta_2 \\
&=\sum_{i=1}^N v_i(\hat{x}_i;\theta_i)+\delta_2 \geq L^*+\delta_2.
\end{align*}
On the other hand, by definition of Algorithm \ref{distalg}, it is guaranteed that
$$
(\text{LHS of (\ref{contradiction})}) = \sum_{i=1}^N v(\tilde{x}_i^{n_l};\theta_i)=\bar{b}^{n_l}
$$
with $\bar{b}^{n_l}-\underbar{b}^{n_l} \leq 1/n_l$. Since $\underbar{b}^{n_l}\leq L^* \leq \bar{b}^{n_l}$,
$(\text{LHS of (\ref{contradiction})})\leq L^*+1/n_l$.
Thus we have shown that
$$
L^*+\delta_2\leq (\text{RHS of (\ref{contradiction})}) \leq (\text{LHS of (\ref{contradiction})}) \leq L^*+1/n_l.
$$
However, since it is possible to take a sufficiently large $l$ so that $\delta_2>1/n_l$, the above inequality lead to a contradiction.
\end{proof}

\section{Dynamic dual decomposition}
In our earlier study \cite{RefWorks:261,RefWorks:237}, we have proposed a real-time electricity pricing scheme that incentivises strategic consumers/generators over the power grid to implement the socially optimal control action. We have assumed that there is no private information so that leader is able to compute the socially optimal control. The motivation of introducing a payment mechanism in our scenario was not to induce followers a faithful information revelation as in the classical mechanism design problems but to induce them to take the intended control actions.
Our payment mechanism was strongly inspired by the VCG mechanism, but due to this difference, the connection to the classical mechanism design setting was not transparent.

Notice that the formulation of the distributed mechanism introduced in this paper contains both the classical mechanism design problem and the pricing scheme \cite{RefWorks:261,RefWorks:237} as special cases. The classical mechanism design problem corresponds to the single step case ($K=1$) in (\ref{algall}), while the pricing scheme in \cite{RefWorks:261,RefWorks:237} corresponds to the case where there is no private information ($\Theta$ is a trivial singleton set).

Moreover, the distributed mechanism for dual decomposition algorithm considered in this paper suggests a better implementation of the real-time pricing scheme.
In our earlier study, the leader (the central computer) needs to solve a large scale optimal control problem in the centralized manner. Using the idea of dynamic dual decomposition (e.g., \cite{RefWorks:239}) combined with the VCG-like tax mechanism (\ref{grovestax}), computation can be faithfully parallelized. Furthermore, since the current framework is built on non-singleton type space $\Theta$, it allows strategic power generators/consumers to have private information.
More details will be explored in our future work.

\section{Faithful Average Consensus Implementation}
\cedric{As an application of the approach presented above, } we \cedric{now } consider average consensus seeking using dual decomposition when dealing with strategic agents. Let an undirected graph $\mathcal{G}=(\{1,\dots,N\},\mathcal{E})$, with vertex set $\{1,\dots,N\}$ and edge set $\mathcal{E}$, be given to illustrate the communication links between the agents (see Fig.~\ref{figcommtop}). Following~\cite{rabbat2005generalized}, we can achieve the average consensus through solving the optimization problem
\begin{subequations} \label{optimization:1}
\begin{equation}
\min_{x\in\mathbb{R}^N} \hspace{.1in} \sum_{i=1}^N (x_i-\theta_i)^2, \end{equation}
\begin{equation}
\mathrm{s.t.} \hspace{.1in} x_i=x_j, \forall (i,j)\in\mathcal{E},
\end{equation}
\end{subequations}
where $x_i\in\mathbb{R}$ denotes the decision variable of agent~$i$, $1\leq i\leq N$, and $\theta_i\in\Theta_i\subseteq\mathbb{R}$ is its type. Note that our assumption of considering scalar consensus problem is only in place to simplify the presentation and the results can be readily extended to higher dimensional cases using the same line of reasoning. Let us introduce the incidence matrix of $\mathcal{G}$. To do so, we need to assign arbitrary directions to the edges of $\mathcal{G}$. It is important to note that the underlying graph (specifically, the communication graph) is still an undirected graph. Let us define the incidence matrix $B(\mathcal{G})\in\{-1,0,+1\}^{N\times |\mathcal{E}|}$ so that $b_{ij}(\mathcal{G})=1$ if the edge $e_j\in\mathcal{E}$ leaves vertex $i$, $b_{ij}(\mathcal{G})=-1$ if the edge $e_j\in\mathcal{E}$ enters vertex $i$, and $b_{ij}(\mathcal{G})=0$ otherwise. In the rest of the section, we assume that $\mathcal{G}$ is a tree. Using the incidence matrix, we can rewrite the optimization problem in~\eqref{optimization:1} as
\begin{subequations} \label{optimization:2}
\begin{equation}
\min_{x\in\mathbb{R}^N} \hspace{.1in} \sum_{i=1}^N (x_i-\theta_i)^2, \end{equation}
\begin{equation}
\mathrm{s.t.} \hspace{.1in} Rx=0,
\end{equation}
\end{subequations}
where $R=B(\mathcal{G})^\top$. Clearly, the optimization problem~\eqref{optimization:2} is of the form discussed in~(4) when substituting $v_i(x_i;\theta_i)=(x_i-\theta_i)^2$ for all $1\leq i\leq N$. Noting that this optimization problem satisfies the Slater's condition, the duality gap is indeed zero and we can solve the problem using the dual decomposition~\cite[p.\,226]{boyd2004convex}. \cedric{ As a result, Algorithm~\ref{distalg} can be used to handle situation where, unlike in the classical literature (e.g.,~\cite{rabbat2005generalized,schizas2007consensus,ghadimi2011accelerated}), the agents engaged in the averaging process are strategic. However, we present two other algorithms which, unlike Algorithm~\ref{distalg}, allow for direct communication between the followers and therefore, can be considered more desirable. } \farhad{Before stating the results, let us define the sequence of mechanisms $\{M'_n\}_{n\in\mathbb{N}}$, where each mechanism $M'_n$ is introduced in Algorithm~\ref{alg:2}. Furthermore, note that Algorithm~\ref{alg:2} suggests followers' strategies $s^n(\cdot)$ and a social output function $g^n=(\tilde{x}^n, \tilde{t}^n)$. }


\begin{algorithm}[t]
\caption{\label{alg:2} Distributed mechanism $M'_n=(g^n,\Sigma^n,t^n)$ for asymptotically implementing the average consensus.}
\begin{algorithmic}
\REQUIRE Problem data, step size $\gamma$
\ENSURE Social outputs $\tilde{x}$, $\tilde{t}_1,\dots,\tilde{t}_N$
\STATE (F) Initialize $x_1,\dots,x_N$;
\STATE (L) Initialize and broadcast $p$;
\REPEAT
\STATE (F) Each agent solves $x_i=\argmin_{z\in\mathbb{R}} (z-\theta_i)^2+p^TR_iz$ and transmit it to the leader;
\STATE (F) Each agent calculates $v_i=(x_i-\theta_i)^2$ and transmit it to the leader;
\STATE (F) Update dual variables $p_\ell= p_\ell+\gamma (x_i-x_j)$ for all edges $e_\ell=(i,j)\in\mathcal{E}$;
\STATE (L) Compute $\|Rx\|_2$;
\UNTIL{$\|Rx\|_2\leq 1/n$}
\STATE (L) Determine the social decision $\tilde{x}=x$;
\STATE (L) Determine the tax values $\tilde{t}_i=\sum_{j\neq i}v_j$;
\end{algorithmic}
\end{algorithm}

\begin{proposition} \label{prop:1} Define a social choice function $f=(x,t)$ by
\begin{subequations} \label{eqn:social:function}
\begin{equation}
x:\Theta\rightarrow X, x(\theta)=\left(\frac{1}{N}\sum_{i=1}^N \theta_i\right)\mathbf{1},
\end{equation}
\begin{equation}
t:\Theta\rightarrow \mathbb{R}^n, t_i(\theta)=\sum_{j\neq i} (x_j(\theta)-\theta_j)^2,
\end{equation}
\end{subequations}
where $\mathbf{1}$ denotes the vector of all ones in $\mathbb{R}^N$. Then the sequence of mechanisms $\{M'_n\}_{n\in\mathbb{N}}$ provided in Algorithm~\ref{alg:2} asymptotically implements $f$ in ex-post Nash equilibria.
\end{proposition}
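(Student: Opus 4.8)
The plan is to follow the proof of Theorem~\ref{tho:1} almost verbatim, using that~\eqref{optimization:2} is an instance of~\eqref{originalprob} with $v_i(x_i;\theta_i)=(x_i-\theta_i)^2$ and $R=B(\mathcal{G})^{\top}$, that Slater's condition holds (so the duality gap is zero and the optimal value equals the saddle value $L^*$), and that the dual decomposition iteration embedded in Algorithm~\ref{alg:2} converges to the primal--dual optimum. Throughout, write $V(x):=\sum_{j=1}^{N}(x_j-\theta_j)^2$.

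For condition~(1) of Definition~\ref{defaic}: when every follower runs the suggested slave algorithm, the terminal decision $\tilde{x}^{n}\circ s^{n}(\theta)$ is the dual-decomposition iterate at the stopping time $\|Rx\|_2\le 1/n$, so by convergence of dual decomposition it tends to $x(\theta)=(\tfrac1N\sum_i\theta_i)\mathbf{1}$ as $n\to\infty$. Continuity of each $v_j$ then yields $\tilde{t}_i^{n}\circ s^{n}(\theta)=\sum_{j\neq i}(\tilde{x}_j^{n}-\theta_j)^2\to\sum_{j\neq i}(x_j(\theta)-\theta_j)^2=t_i(\theta)$, hence $\|g^{n}\circ s^{n}-f\|\to0$. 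This step is identical in spirit to the corresponding step of Theorem~\ref{tho:1}.

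For condition~(2), I would argue by contradiction: suppose that for some $i$, some $\delta_2>0$, a sequence of deviations $\{\hat{s}_i^{n_l}\}$ and a subsequence $\{n_l\}$ the inequality opposite to~(2) holds, and set $\hat{x}:=\tilde{x}^{n_l}\circ(\hat{s}_i^{n_l},s_{-i}^{n_l}(\theta_{-i}))$. The VCG-type identity used in Theorem~\ref{tho:1} still applies here: because $\tilde{t}_i=\sum_{j\neq i}v_j$ does not involve agent $i$'s own report and the remaining followers report truthfully, agent $i$'s realized net cost under \emph{any} profile equals $V$ evaluated at the terminal allocation. Thus the left-hand side of the assumed inequality equals $V(\tilde{x}^{n_l})$ (honest play) and the right-hand side equals $V(\hat{x})+\delta_2$. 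By condition~(1), $V(\tilde{x}^{n_l})\to L^*$, so for large $l$ the assumed inequality forces $V(\hat{x})$ to be bounded, hence $\hat{x}$ to lie in a fixed ball on which $V$ is $M$-Lipschitz for some finite $M$.

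The only genuinely new point — and the step I expect to be the main obstacle — is that Algorithm~\ref{alg:2} outputs the raw iterate $\tilde{x}=x$ rather than its projection onto $\{Rx=0\}$, so $\hat{x}$ need not be feasible and one cannot write $V(\hat{x})\ge L^*$ directly as in Theorem~\ref{tho:1}. To handle this I would exploit that $\mathcal{G}$ is a tree, so $R=B(\mathcal{G})^{\top}$ has full row rank and its smallest nonzero singular value $\sigma$ is positive; the stopping rule $\|R\hat{x}\|_2\le 1/n_l$ then places $\hat{x}$ within distance $1/(n_l\sigma)$ of some feasible $\bar{x}=\bar{c}\mathbf{1}$, for which $V(\bar{x})\ge L^*$, so $V(\hat{x})\ge L^*-M/(n_l\sigma)$. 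Combining, $L^*-M/(n_l\sigma)+\delta_2\le V(\hat{x})+\delta_2\le V(\tilde{x}^{n_l})\to L^*$, whence $\delta_2\le M/(n_l\sigma)+o(1)\to0$, a contradiction for $l$ large. Beyond this feasibility-gap estimate — which rests on positivity of $\sigma$ (from $\mathcal{G}$ being a tree) and on the locally uniform Lipschitz bound on $V$ supplied by the boundedness already extracted from the contradiction hypothesis — the argument transcribes directly from the proof of Theorem~\ref{tho:1}.
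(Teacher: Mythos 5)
Your proof is correct, and it is in fact more complete than the paper's, which simply asserts that the argument of Theorem~\ref{tho:1} carries over. The skeleton is the same (condition (1) of Definition~\ref{defaic} by convergence plus continuity; condition (2) by contradiction, using the VCG-type identity that agent $i$'s realized net cost equals the total social cost $\sum_{j}(\cdot_j-\theta_j)^2$ at the terminal allocation), but you correctly isolate the one step of Theorem~\ref{tho:1} that does not transcribe: in Algorithm~\ref{distalg} the leader projects onto the affine constraint set, so the deviation outcome $\hat{x}$ is feasible and the bound $\sum_j(\hat x_j-\theta_j)^2\ge L^*$ is immediate, whereas Algorithm~\ref{alg:2} outputs the raw iterate, which is only approximately feasible ($\|R\hat x\|_2\le 1/n_l$) --- a point the paper itself concedes in its closing remark. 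Your repair is sound: the distance from $\hat x$ to $\ker R=\mathrm{span}\{\mathbf{1}\}$ is at most $1/(n_l\sigma)$ with $\sigma>0$ the smallest singular value of $R=B(\mathcal G)^\top$ (full row rank since $\mathcal G$ is a tree), and the local Lipschitz bound on the quadratic objective applies because the contradiction hypothesis confines $\hat x$ to a fixed ball. Likewise, replacing the duality-gap stopping bound $\bar b-\underline{b}\le 1/n$ of Algorithm~\ref{distalg} (absent from Algorithm~\ref{alg:2}) by the convergence of the honest-play cost to $L^*$ is the right move. For comparison, the paper's proof of the analogous proposition for Algorithm~\ref{alg:3} handles the same infeasibility issue differently, by passing to the limit of the deviation outputs and using feasibility of the limit point; your quantitative estimate avoids having to establish that such a limit exists, which for Algorithm~\ref{alg:2} under an arbitrary deviation is not obvious. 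The only caveat, shared with the paper's own treatment, is the implicit assumption that the algorithm terminates (finite $K(n)$) under the deviation, so that the terminal allocation and tax are well defined.
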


\begin{proof} The proof follows the same line of reasoning as in the proof of Theorem~\ref{tho:1}.
\end{proof}

\begin{remark} As we have described in Algorithm~\ref{alg:2}, the agents need to solve $x_i=\argmin_{z\in\mathbb{R}} (z-\theta_i)^2+p^TR_iz$. This optimization problem has a explicit solution $x_i=\theta_i-0.5p^TR_i$. Therefore, at each iteration, the agents only need to apply a simple linear update rule, calculate the new cost, and send these information to the leader.
\end{remark}

\begin{algorithm}[t]
\caption{\label{alg:3} Distributed mechanism $M''_n=(g^n,\Sigma^n,t^n)$ for asymptotically implementing the average consensus.}
\begin{algorithmic}
\REQUIRE Problem data
\ENSURE Social outputs $\tilde{x}$, $\tilde{t}_1,\dots,\tilde{t}_N$
\STATE \farhad{(L) Set $\alpha\in(0,1/d_{\mathrm{max}})$ (where $d_{\mathrm{max}}$ } \cedric{denotes } \farhad{the maximum degree of the vertices in $\mathcal{G}$) and broadcast it;}
\STATE (F) Initialize $\farhad{z}_i(0)=\theta_i$ for each $1\leq i\leq N$;
\STATE (F) Initialize $\tau=0$;
\REPEAT
\STATE (F) Increase iteration number $\tau$ by one;
\STATE (F) Each agent calculates $\farhad{z}_i(\tau)=\farhad{z}_i(\tau-1)+\alpha \sum_{j\in\mathcal{N}_i} \linebreak[4](\farhad{z}_j(\tau-1) -\farhad{z}_i(\tau-1))$, where $\mathcal{N}_i$ is the set of all neighbors of vertex $i$ in~$\mathcal{G}$, and transmit it to the leader;
\STATE (F) Each agent computes $v_i=(\farhad{z}_i(\tau)-\theta_i)^2$ and transmit it to the leader;
\STATE (L) Calculate $\|R\farhad{z}(\tau)\|_2$;
\UNTIL{$\|R\farhad{z}(\tau)\|_2\leq 1/n$}
\STATE (L) Determine the social decision $\tilde{x}=\farhad{z}(\tau)$;
\STATE (L) Determine the tax values $\tilde{t}_i=\sum_{j\neq i}v_j$;
\end{algorithmic}
\end{algorithm}

\begin{figure*}[t]
\centering
\begin{equation} \label{eqn:longequation:1}
\bigg|\big[(\tilde{x}_i^{n}\circ(s_i^{n}(\theta_i),s_{-i}^{n} (\theta_{-i}))-\farhad{\theta_i})^2+\tilde{t}_i^{n}\circ (s_i^{n}(\theta_i),s_{-i}^{n}(\theta_{-i})) \big]- \big[(x_i(\theta)-\theta_i)^2+\sum_{j\neq i} (x_j(\theta)-\theta_j)^2\big]\bigg|\leq \delta_2/2,
\end{equation}
\vspace{-.07in}
\hrule
\begin{equation} \label{eqn:longequation:2}
\bigg|\big[(\tilde{x}_i^{n}\circ(\hat{s}_i^{n}(\theta_i),s_{-i}^{n} (\theta_{-i}))-\farhad{\theta_i})^2+\tilde{t}_i^{n}\circ (\hat{s}_i^{n}(\theta_i),s_{-i}^{n}(\theta_{-i}))\big]- \big[ (\hat{x}_i(\theta)-\theta_i)^2+\sum_{j\neq i} (\hat{x}_j(\theta)-\theta_j)^2\big]\bigg|\leq \delta_2/2,
\end{equation}
\vspace{-.07in}
\hrule
\end{figure*}

Note that $\{M'_n\}_{n\in\mathbb{N}}$ is not the only sequence of mechanisms that asymptotically implements the average consensus. In order to show this, we define the sequence of mechanisms $\{M''_n\}_{n\in\mathbb{N}}$ using Algorithm~\ref{alg:3} and show that this sequence indeed asymptotically implements the average consensus. \farhad{Again, Algorithm~\ref{alg:3} suggests followers' strategies $s^n(\cdot)$ and a social output function $g^n=(\tilde{x}^n, \tilde{t}^n)$. }

\begin{proposition} Define a social choice function $f=(x,t)$ by~\eqref{eqn:social:function}. Then the sequence of mechanisms $\{M''_n\}_{n\in\mathbb{N}}$ provided in Algorithm~\ref{alg:3} asymptotically implements $f$ in ex-post Nash equilibria.
\end{proposition}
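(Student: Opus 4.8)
The plan is to reuse the architecture of the proof of Theorem~\ref{tho:1}, isolating the one place where Algorithm~\ref{alg:3} differs from Algorithm~\ref{distalg}: here the leader announces the raw consensus iterate $z(\tau)$ rather than its projection onto $\{x:Rx=0\}$, so a deviating follower can steer the declared social decision to a point that is only \emph{approximately} feasible for~\eqref{optimization:2}, satisfying $\|R\tilde x\|_2\leq 1/n$ but not $R\tilde x=0$. The additional ingredient needed beyond Theorem~\ref{tho:1} is therefore a perturbation estimate: $1/n$-approximate feasibility already forces the averaging objective $\sum_j(x_j-\theta_j)^2$ to lie within $O(1/n)$ of its constrained optimum $L^*=\sum_{j=1}^N(\bar\theta-\theta_j)^2$, where $\bar\theta:=\tfrac1N\sum_i\theta_i$; this rests on the spectral gap of $R=B(\mathcal G)^\top$ on $\mathbf1^\perp$ (equivalently, the algebraic connectivity of the tree $\mathcal G$) and on the invariance of the running average under the consensus map.

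For clause~(1) of Definition~\ref{defaic} I would first collect the facts about the honest run. With $\alpha\in(0,1/d_{\mathrm{max}})$ the update in Algorithm~\ref{alg:3} is $z(\tau)=(I-\alpha L)z(\tau-1)$, with $L=R^\top R$ the Laplacian of the connected tree $\mathcal G$; the matrix $I-\alpha L$ is symmetric and doubly stochastic, has the simple eigenvalue $1$ with eigenvector $\mathbf1$ and all remaining eigenvalues in $(-1,1)$, so $z(\tau)\to\bar\theta\mathbf1$ while $\tfrac1N\mathbf1^\top z(\tau)\equiv\bar\theta$. Since $Rz(\tau)=R(z(\tau)-\bar\theta\mathbf1)\to0$, the stopping index $K(n)$ is finite for every $n$; writing $z(K(n))=\bar\theta\mathbf1+\zeta$ with $\zeta\perp\mathbf1$ (legitimate by invariance of the average) and using $\ker R=\mathrm{span}(\mathbf1)$, hence $\|R\zeta\|_2\geq\sigma\|\zeta\|_2$ with $\sigma>0$ the least nonzero singular value of $R$, gives $\|z(K(n))-\bar\theta\mathbf1\|_2\leq 1/(n\sigma)$. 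Consequently $\tilde x^n\circ s^n(\theta)\to x(\theta)$ and, by continuity of $v_j(\cdot)=(\cdot-\theta_j)^2$, $\tilde t_i^n\circ s^n(\theta)=\sum_{j\neq i}(z_j(K(n))-\theta_j)^2\to\sum_{j\neq i}(\bar\theta-\theta_j)^2=t_i(\theta)$, so $\|g^n\circ s^n-f\|\to0$, which yields clause~(1).

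For clause~(2) I would argue as in Theorem~\ref{tho:1}, through the two bookkeeping estimates~\eqref{eqn:longequation:1}--\eqref{eqn:longequation:2}. Since every $j\neq i$ plays $s_j^n$ and reports honestly, under truthful play of $i$ its net cost equals exactly $\|z(K(n))-\theta\|_2^2$, which by the bound above differs from $L^*=(x_i(\theta)-\theta_i)^2+\sum_{j\neq i}(x_j(\theta)-\theta_j)^2$ by at most $2\sqrt{L^*}/(n\sigma)+1/(n\sigma)^2=:\epsilon_n\to0$; this is~\eqref{eqn:longequation:1}. Under a deviation $\hat s_i^n$ the net cost of $i$ likewise equals exactly $\|\hat x-\theta\|_2^2$, where $\hat x:=\tilde x^n\circ(\hat s_i^n,s_{-i}^n(\theta_{-i}))$ is the declared decision, which the stopping test forces to obey $\|R\hat x\|_2\leq 1/n$ no matter when the deviator lets it fire (a non-terminating deviation produces no social outcome, hence no profitable comparison, and may be set aside). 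Projecting, $\hat x(\theta):=\hat x-R^\top(RR^\top)^{-1}R\hat x\in\{x:Rx=0\}$ satisfies $\|\hat x-\hat x(\theta)\|_2\leq C(\mathcal G)/n$, so $\big|\|\hat x-\theta\|_2^2-\big((\hat x_i(\theta)-\theta_i)^2+\sum_{j\neq i}(\hat x_j(\theta)-\theta_j)^2\big)\big|=:\eta_n\to0$ (this is~\eqref{eqn:longequation:2}), and, $\hat x(\theta)$ being feasible, $(\hat x_i(\theta)-\theta_i)^2+\sum_{j\neq i}(\hat x_j(\theta)-\theta_j)^2\geq L^*$. Hence the net cost of $i$ under $s_i^n$ is $\leq L^*+\epsilon_n<L^*-\eta_n+\delta_2\leq$ (its net cost under $\hat s_i^n$) $+\,\delta_2$ once $n$ is large enough that $\epsilon_n+\eta_n<\delta_2$, which is the strict inequality of clause~(2); therefore $\{M''_n\}$ asymptotically implements $f$.

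The step I expect to be the crux is the passage to~\eqref{eqn:longequation:2}: in Theorem~\ref{tho:1} the analogous inequality was immediate because the leader's explicit projection made the declared outcome exactly feasible, so its objective was literally $\geq L^*$; here one instead has to bound the maximal decrease of $\sum_j(x_j-\theta_j)^2$ over the $1/n$-enlargement $\{x:\|Rx\|_2\leq1/n\}$ of the feasible set --- quantified through the least nonzero singular value $\sigma$ of $R$ (equivalently the Fiedler value of $\mathcal G$) --- and to verify that a strategic follower gains nothing by manipulating \emph{when} the stopping test is met, since every stopping instant still yields $\|R\hat x\|_2\leq1/n$. A minor technical point, exactly as in Theorem~\ref{tho:1}, is that $L^*$ and the constant governing~\eqref{eqn:longequation:1}--\eqref{eqn:longequation:2} depend on $\theta$ (and on $\mathcal G$), so the thresholds ``$n$ large'' must dominate them; this is automatic when $\Theta$ is bounded, and otherwise the argument is read pointwise in $\theta$ as in the contradiction form of the proof of Theorem~\ref{tho:1}.
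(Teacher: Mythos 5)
Your proof is correct and follows the paper's overall bookkeeping scheme (the two estimates \eqref{eqn:longequation:1}--\eqref{eqn:longequation:2} plus the global-optimality inequality \eqref{eqn:proof:globalminimizer}), but it departs from the paper at the crux you correctly identify. The paper handles a deviation by \emph{fixing} a sequence $\{\hat s_i^n\}$, asserting that the limit $\hat x(\theta)=\lim_{n\to\infty}\tilde x^n\circ(\hat s_i^n,s_{-i}^n(\theta_{-i}))$ exists (justified by appeal to the dynamics $z(\tau)=(I-\alpha R^\top R)z(\tau-1)$, which is shaky once agent $i$ no longer follows that update), noting $R\hat x(\theta)=0$ because the stopping rule forces $\|R\tilde x^n\|_2\le 1/n$, and then invoking optimality of $x(\theta)$ over the feasible set of \eqref{optimization:2}. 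You instead work at each finite $n$: you project the declared outcome onto $\{x:Rx=0\}$ and control the projection error by $\|R\hat x\|_2\le 1/n$ through the least nonzero singular value of $R$ (the tree's Fiedler value), so feasibility-based optimality transfers to $\hat x$ up to an $O(1/n)$ term. This buys two things the paper's argument lacks: you avoid the unproved existence-of-limit claim for deviating play, and you get a bound that is uniform over deviations, which matches the quantifier order in Definition~\ref{defaic} more directly than the paper's fixed-sequence (Theorem~\ref{tho:1}-style, contradiction-flavored) reading; your quantitative treatment of clause (1) via the spectral gap likewise sharpens the paper's qualitative citation of the consensus convergence result. One small caution: your two-sided estimate $\eta_n$ for the difference of squares is not uniform over unbounded declared outcomes (the constant involves $\|\hat x\|$), but the only direction you actually use is the lower bound $\|\hat x-\theta\|_2\ge\|\hat x(\theta)-\theta\|_2-C(\mathcal G)/n$ from the reverse triangle inequality, which is uniform, so the argument stands after stating \eqref{eqn:longequation:2} one-sidedly; your explicit dismissal of non-terminating deviations is a point the paper leaves implicit and is handled reasonably.
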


\begin{proof} Following~\cite{xiao2004fast}, \farhad{since $\mathcal{G}$ is a tree } and $\alpha\in(0,1/d_{\mathrm{max}})$, we have \farhad{$\lim_{\tau\rightarrow \infty}z(\tau)=(N^{-1}\sum_{i=1}^N \theta_i)\mathbf{1}.$ } 
Thus, we clearly get \farhad{$\lim_{n\rightarrow \infty}\tilde{x}^n\circ s^n(\theta)=x(\theta)$. } Considering the continuity of the cost functions, we can also recover \farhad{$\lim_{n\rightarrow \infty}\tilde{t}_i^n\circ s^n(\theta) =\sum_{j\neq i} \left(x_j(\theta)-\theta_j\right)^2.$ }  Evidently, for any $\delta_2>0$, there exists $n_1\in\mathbb{N}$ such that~\eqref{eqn:longequation:1} holds true for all $n\geq n_1$.
Therefore, 
\begin{equation} \label{eqn:proof:3}
\begin{split}
(\tilde{x}_i^{n}&\circ(s_i^{n}(\theta_i),s_{-i}^{n} (\theta_{-i}))-\farhad{\theta_i})^2\\&+\tilde{t}_i^{n}\circ (s_i^{n}(\theta_i),s_{-i}^{n}(\theta_{-i}))-\delta_2/2 \\ &\hspace{.4in}\leq (x_i(\theta)-\theta_i)^2+\sum_{j\neq i} (x_j(\theta)-\theta_j)^2.
\end{split}
\end{equation}
Now, assume that there exists an index $i$ such that agent~$i$ follows $\{\hat{s}_i^n\}_{n\in\mathbb{N}}$. Clearly, by the construction of Algorithm~\ref{alg:3}, we have
\farhad{$\lim_{n\rightarrow \infty} R\tilde{x}^{n}\circ(\hat{s}_i^{n}(\theta_i),s_{-i}^{n} (\theta_{-i}))=0.$ }
Therefore, because of the fact that $\farhad{z}(\tau)=(I-\alpha R^\top R)\farhad{z}(\tau-1)$, we know that the limit $\lim_{n\rightarrow \infty}\tilde{x}^{n}\circ(\hat{s}_i^{n}(\theta_i),s_{-i}^{n} (\theta_{-i}))$ indeed exists. Let us use the notation \farhad{$\hat{x}(\theta)=\lim_{n\rightarrow \infty}\tilde{x}^{n}\circ(\hat{s}_i^{n}(\theta_i),s_{-i}^{n} (\theta_{-i}))$}. Because of the continuity of the cost functions, for any $\delta_2>0$, there exists $n_2\in\mathbb{N}$ such that~\eqref{eqn:longequation:2} holds true for all $n\geq n_2$. Therefore,
\begin{equation} \label{eqn:proof:4}
\begin{split}
(\hat{x}_i(\theta)-&\theta_i)^2+\sum_{j\neq i} (\hat{x}_j(\theta)-\theta_j)^2\\ & \leq (\tilde{x}_i^{n}\circ(\hat{s}_i^{n}(\theta_i),s_{-i}^{n} (\theta_{-i}))-\farhad{\theta_i})^2\\&\hspace{.4in}+\tilde{t}_i^{n} \circ(\hat{s}_i^{n}(\theta_i),s_{-i}^{n}(\theta_{-i}))+\delta_2/2.
\end{split}
\end{equation}
\farhad{Furthermore, because $\hat{x}(\theta)$ is a feasible point and $x(\theta)$ is the global solution of~\eqref{optimization:2} (see~\cite{rabbat2005generalized}), the following inequality holds
\begin{equation} \label{eqn:proof:globalminimizer}
\begin{split}
(x_i(\theta)-&\theta_i)^2+\sum_{j\neq i} (x_j(\theta)-\theta_j)^2\\&\leq (\hat{x}_i(\theta)-\theta_i )^2+\sum_{j\neq i} (\hat{x}_j(\theta)-\theta_j)^2.
\end{split}
\end{equation}
Finally, } combining~\eqref{eqn:proof:3}, \eqref{eqn:proof:4}, \farhad{and~\eqref{eqn:proof:globalminimizer} } results in
\begin{equation*}
\begin{split}
&(\tilde{x}_i^{n}\circ(s_i^{n}(\theta_i),s_{-i}^{n} (\theta_{-i}))-\farhad{\theta_i})^2\\&\hspace{.7in}+\tilde{t}_i^{n} \circ (s_i^{n}(\theta_i),s_{-i}^{n}(\theta_{-i}))-\delta_2/2
\\ &\leq (\tilde{x}_i^{n}\circ(\hat{s}_i^{n}(\theta_i),s_{-i}^{n} (\theta_{-i}))-\farhad{\theta_i})^2
\\&\hspace{.7in}+\tilde{t}_i^{n}\circ (\hat{s}_i^{n}(\theta_i),s_{-i}^{n}(\theta_{-i}))+\delta_2/2,
\end{split}
\end{equation*}
for $n\geq \max(n_1,n_2)$. This concludes the proof.
\end{proof}
\begin{remark}
Algorithms~\ref{alg:2} and~\ref{alg:3} allow direct communications between followers, while Algorithm~\ref{distalg} involves only leader-follower communications. Also, unlike Algorithm~\ref{distalg}, the social decision $\tilde{x}^n\circ s^n$ as an output of Algorithms~\ref{alg:2} or~\ref{alg:3} may not be feasible (feasibility holds only at the limit, i.e., $\lim_{n\rightarrow \infty}\tilde{x}^n\circ s^n \in X$). Nevertheless, the notion of asymptotic incentive compatibility (Definition \ref{defaic}) is still applicable.
\end{remark}

\farhad{
\section{Conclusions}
We presented a framework for faithful implementation of dual-decomposition algorithms as well as average consensus seeking algorithms in a network of strategic agents. We introduced the notion of asymptotic incentive compatibility for a sequence of mechanisms, that is, this sequence provides every follower
a diminishing incentive to deviate from the suggested slave
algorithm. We proposed a tax mechanism, inspired by the classical Vickrey--Clarke--Groves mechanisms, to asymptotically implements a social choice function in ex-post Nash equilibria.
}

\bibliographystyle{ieeetr}
\bibliography{DualDecomposition}

\end{document}